\documentclass[letterpaper, 10 pt, conference]{ieeeconf}  % Comment this line out
                                                          % if you need a4paper
%\documentclass[a4paper, 10pt, conference]{ieeeconf}      % Use this line for a4
                                                          % paper

\IEEEoverridecommandlockouts                              % This command is only
                                                          % needed if you want to
                                                          % use the \thanks command
\overrideIEEEmargins
% See the \addtolength command later in the file to balance the column lengths
% on the last page of the document

% The following packages can be found on http:\\www.ctan.org
%\usepackage{graphics} % for pdf, bitmapped graphics files
%\usepackage{epsfig} % for postscript graphics files
%\usepackage{mathptmx} % assumes new font selection scheme installed
%\usepackage{times} % assumes new font selection scheme installed
\usepackage{amsmath} % assumes amsmath package installed
\usepackage{amssymb}  % assumes amsmath package installed
\usepackage{amsfonts}
\usepackage{dsfont}
\bibliographystyle{plain}

\makeatletter
%\makeatletter
%\xdef\@endgadget#1{{\unskip\nobreak\hfil\penalty50\hskip1em\hbox{}\nobreak
%    \hfil#1\parfillskip=0pt\finalhyphendemerits=0\par}}
%\def\@qedsymbol{${}_\blacksquare$}
%\def\qed{\@endgadget{\@qedsymbol}}
%\def\@Endofsymbol{$_\square$}
%\def\Endoftheorem{\@endgadget{\@Endofsymbol}}
\newtheorem{lemma}{Lemma}[section]

\newtheorem{example}[lemma]{Example}
\newtheorem{definition}[lemma]{Definition}

\newtheorem{proposition}[lemma]{Proposition}
\newtheorem{remark}[lemma]{Remark}

\def\X{\mathcal{X}}
\def\U{\mathcal{U}}
\def\Y{\mathcal{Y}}

\def\K{\mathcal{K}}

\newcommand{\eps}{\epsilon}

\def\dx{\delta x}
\def\du{\delta u}
\def\dy{\delta y}

\def\dSigma{\delta \Sigma}

\newcommand{\supscr}[2]{#1^{\textsc{#2}}}
\newcommand{\pder}[2]{\frac{\partial #1}{\partial #2}}
\newcommand{\real}{\mathbb{R}}

\DeclareMathOperator{\rank}{rank}

\title{\LARGE \bf
A geometric approach to differential Hamiltonian systems and differential Riccati equations}

\author{Arjan van der Schaft
\thanks{A.J. van der Schaft is with the Johann Bernoulli Institute for Mathematics and Computer
Science, University of Groningen, PO Box 407, 9700 AK, the
Netherlands
        {\tt\small A.J.van.der.Schaft@rug.nl}}
}

\begin{document}

\maketitle
\thispagestyle{empty}
\pagestyle{empty}

\begin{abstract}
Motivated by research on contraction analysis and incremental stability/stabilizability the study of 'differential properties' has attracted increasing attention lately. Previously lifts of functions and vector fields to the tangent bundle of the state space manifold have been employed for a geometric approach to differential passivity and dissipativity. In the same vein, the present paper aims at a geometric underpinning and elucidation of recent work on 'control contraction metrics' and 'generalized differential Riccati equations'.
\end{abstract}

\section{Introduction}
This paper provides a geometric counterpart to recent work on differential versions of Hamiltonian matrices and Riccati equations, motivated by incremental stability analysis and incremental stabilizability (cf., \cite{lohmiller, pavlov, jouffroy, sepulchreforni, sepulchreforni1}). While in most of this work either coordinate expressions are used (see, e.g., \cite{manchester, manchester1}), or an algebraic point of view is adopted (see in particular \cite{kawanooud, kawano, halas}, motivated by, e.g., \cite{leroy}, \cite{lam,lam1}), the current paper provides a geometric, coordinate-free, description based on the geometric theory of liftings of functions and vector fields on manifolds to their tangent and cotangent bundle as detailed in \cite{Yals}; see also \cite{crouch, cortes, vdsdiffpass}. 

Such a geometric approach may provide additional insights, and may yield elegant proofs for statements which otherwise require cumbersome coordinate computations that are only locally valid. Furthermore, a geometric approach can address {\it global} problems. It was explored before in the study of differential passivity and dissipativity in \cite{vdsdiffpass}.

\section{Preliminaries}
\subsection{Basic notions}
Throughout this paper all objects (manifolds, functions, vector fields, one-forms, (co-)distributions, subbundles, ..) will be assumed to be {\it smooth} (infinitely differentiable). 

Consider an $n$-dimensional state space manifold $\X$ with tangent bundle $T \X$ and co-tangent bundle $T^* \X$. Consider furthermore a vector field $f$ on $\X$, that is, a smooth section of $T \X$. 
A distribution $D$ on $\X$ (a subbundle of $T\X$) is called invariant \cite{nvds} with respect to $f$ if $L_f D \subset D$, that is $L_fX \in D$ for any vector field $X$ in $D$. Here $L_f$ denotes Lie derivative with respect to $f$; i.e., $L_fX = [f, X]$.

In particular, the one-dimensional distribution spanned by a vector field $X$ on $\X$ is called invariant with respect to $f$ if there exists a function $\gamma$ on $\X$ such that
\begin{equation}
L_f X = \gamma X
\end{equation}
In local coordinates $x=(x^1, \cdots, x^n)$ for $\X$ and writing $f$ and $X$ as column vectors this amounts to the equality
\begin{equation}\label{1}
\frac{\partial f}{\partial x}(x)X(x) - \frac{\partial  X}{\partial x}(x)f(x) = - \gamma (x) X(x)
\end{equation}
(In \cite{kawanooud} this is expressed algebraically by saying that $X$ is a right eigenvector for $\frac{\partial f}{\partial x}(x)$ with eigenvalue $\gamma$.)
Similarly \cite{nvds}, a co-distribution $P$ on $\X$ (a subbundle of $T^*\X$) is called invariant if $L_fP \subset P$, that is, $L_f\alpha \in P$ for any one-form $\alpha$ on $\X$ (a one-form on $\X$ is a smooth section of $T^* \X$). In particular, the one-dimensional {\it co-distribution} spanned by a one-form $\alpha$ is said to be invariant with respect to $f$ if there exists a function $\gamma$ on $\X$ such that
\begin{equation}\label{2}
L_f \alpha = \gamma \alpha
\end{equation}
In local coordinates $x$, and expressing $\alpha$ as column vector, this amounts to the equality\footnote{Use the magical formula $L_f\alpha = L_f d \alpha + d( \alpha (f))$.}
\begin{equation}
\alpha^T (x)\frac{\partial f}{\partial x}(x) + (\frac{\partial  \alpha}{\partial x}(x)f(x))^T =  \gamma (x) \alpha^T(x)
\end{equation}
(In \cite{kawanooud} this is expressed by saying that $\alpha$ is a left eigenvector for $\frac{\partial f}{\partial x}(x)$ with eigenvalue $\gamma$.)

\subsection{Lifts of functions and vector fields to the tangent and cotangent bundle}
In this subsection it is recalled from \cite{Yals} (see also \cite{crouch},\cite{cortes}), how functions and vector fields on the state space manifold $\X$ can be lifted to functions, respectively vector fields, on its tangent and cotangent bundle. 

First we introduce the notions of {\it complete} and {\it vertical}
lifts of functions and vector fields to the {\it tangent bundle}. 

Given a function $h$ on $\X$, the \emph{complete
  lift} of $h$ to $T\X$, $\supscr{h}{c}:T\X \rightarrow \real$, is
defined by $\supscr{h}{c} (x,\dx) = \langle dh,\dx \rangle(x)$, with $\langle \cdot,\cdot \rangle$ denoting the duality pairing between elements of the co-tangent space and the tangent space at $x \in \X$. In local coordinates $x=(x^1,\ldots,x^n)$ for $\X$ and the induced local coordinates $(x,\dx) = (x^1,\ldots,x^n, \dx^1,\ldots, \dx^n)$ for $T\X$ this
reads
\begin{equation}
\supscr{h}{c} (x,\dx) = \sum_{a=1}^n \pder{h}{x^a}(x) \, \dx_a 
\end{equation}
The \emph{vertical lift} of a function $h$ to a function on $T\X$, $\supscr{h}{v} : T\X
\rightarrow \real$, is defined by $\supscr{h}{v}(x,\dx) = h \circ \tau_{\X}$,
where $\tau_{\X} : T\X \to \X$ denotes the tangent bundle projection $\tau_{\X}(x,\dx)=x$. In local induced coordinates $\supscr{h}{v}(x,\dx)=h(x)$.

Given a vector
field $f$ on $\X$, the \emph{complete lift} $\supscr{f}{c}$ of $f$ to $T\X$ is defined as the unique vector field
satisying $L_{\supscr{f}{c}}\supscr{h}{c} = \supscr{(L_f h)}{c}$, for any
function $h$ on $\X$ (with $L_fh$ denoting the Lie-derivative of the function $h$ along the vector field $f$, and similarly for $L_{\supscr{f}{c}}\supscr{h}{c}$).  Alternatively, if $\Phi_t : \X \rightarrow \X$,
$t \in [0,\eps)$, denotes the flow of $f$, then $\supscr{f}{c}$ is the vector field whose flow is given by
$(\Phi_t)_*: T\X \rightarrow T\X$. In induced local coordinates $(x^1,\ldots,x^n, \dx^1,\ldots, \dx^n)$ for $T\X$,
\begin{gather}\label{eq:complete-lift}
  \supscr{f}{c} (x,\dx) = \sum_{a=1}^n f_a(x) \pder{}{x^a} + \sum_{a,b=1}^n
  \pder{f_a}{x^b} (x) \dx^b \pder{}{(\dx^a)} 
\end{gather}
Finally, the \emph{vertical lift} $\supscr{f}{v}$ of $f$ to $T\X$ is the unique vector field on $T\X$ such that $L_{\supscr{f}{v}}
\supscr{h}{c} = \supscr{(L_fh)}{v}$, for any function $h$.
In induced local coordinates for $T\X$
\begin{gather}\label{eq:vertical-lift}
  \supscr{f}{v} (x,\dx) = \sum_{a=1}^n f_a(x) \pder{}{(\dx^a)} 
\end{gather}

Furthermore, the {\it vertical} and {\it Hamiltonian} lifts to the {\it co-tangent bundle} are defined as follows; see again \cite{Yals}. As before for the tangent bundle case, the \emph{vertical lift} $\supscr{h}{v} : T^*\X
\rightarrow \real$ of a function $h : \X \to \real$, is defined by $\supscr{h}{v} = h \circ \pi_{\X}$,
where $\pi_{\X} : T^*\X \to \X$ denotes the co-tangent bundle projection $\pi_{\X}(x,p)=x$. In induced local  coordinates $(x,p) := (x^1,\ldots,x^n, p_1,\ldots, p_n)$ for $T^*\X$ we have $\supscr{h}{v}(x,p)=h(x)$. 

Since there is a natural symplectic form on the cotangent bundle $T^* \X$ we can define the Hamiltonian vector field on $T^*\X$ corresponding to $\supscr{h}{v}$, denoted by $X_{\supscr{h}{v}}$, and called the {\it vertical Hamiltonian lift}. In induced local coordinates $(x,p)$ for $T^*\X$
\begin{equation}\label{Hamvertical-lift}
X_{\supscr{h}{v}} = - \sum_{a=1}^n \frac{\partial h}{\partial x_a}(x) \pder{}{(p^a)} 
\end{equation}
Furthermore, for any vector field $f$ on $\X$ define the {\it Hamiltonian function} $H^f : T^* \X \to \mathbb{R}$ as
\begin{equation}
H^f(x,p) = \langle p, f(x) \rangle = p^Tf(x)
\end{equation}
The corresponding Hamiltonian vector field on $T^*\X$, denoted by $X_{H^f}$, is called the {\it complete Hamiltonian lift}. 
In induced local coordinates for $T^*\X$
\begin{equation}
X_{H^f} = \sum_{a=1}^n f_a(x) \pder{}{x^a} - \sum_{a,b=1}^n
  \pder{f_b}{x^a} (x) p_b \pder{}{(p_a)} 
\end{equation}
For later use we mention that given a lift (complete or vertical) of a vector field $f$ to the tangent bundle, as well as a lift (Hamiltonian or vertical) to the co-tangent bundle, we can {\it combine} the two lifts into a vector field defined on the Whitney sum $T \X \oplus T^* \X$ (that is, the base manifold $\X$ together with the fiber space $T_x \X \times T_x^* \X$ at any point $x \in \X$). An example (to be used later on) is the combination of the complete lift $\supscr{f}{c}$ on $T\X$ with the Hamiltonian extension $X_{H^f}$ on $T^*\X$, which defines a vector field on $T \X \oplus T^* \X$, which will be denoted as $\supscr{f}{c} \oplus X_{H^f}$. Furthermore, since the vertical lifts $\supscr{f}{v}$ (to $T\X$) and $X_{\supscr{H}{v}}$ (to $T^*\X$) do not have components on the base manifold $\X$ we may also define the combined vector field $\supscr{f}{v} \oplus X_{\supscr{h}{v}}$ on the Whitney sum $T \X \oplus T^* \X$ for any vector field $f$ and function $h$.

\subsection{Prolongation of nonlinear control systems to the tangent and the co-tangent bundle}
Armed with the notions of the lifts of functions and vector fields to tangent and cotangent bundle as described in the previous subsection, we now recall from \cite{crouch}, see also \cite{cortes, vdsdiffpass}, how we can define prolongations of {\it nonlinear control systems} to tangent and cotangent bundles.

Consider a nonlinear control system $\Sigma$ with state space $\X$, affine in the
inputs $u$, and with an equal number of outputs $y$, given as
\begin{equation}\label{eq:system}
  \Sigma : \left\{
    \begin{array}{l}
      \displaystyle{\dot{x} = f(x) + \sum_{j=1}^m u_j g_j (x)} \\
      y_j = h_j(x) \, , \quad j=1,\ldots,m \, , 
    \end{array}
  \right.
\end{equation}
where $x \in \X$, and $u = (u_1,\ldots,u_m) \in \U \subset
\real^m$. 
%The vector fields $f, g_1,\ldots,g_m$ on $\X$ are assumed to
%be complete, and $h_1,\ldots,h_m$ are real-valued functions on $\X$. 
The
set $\U$ is the input space, which is assumed to be an
open subset of $\real^m$. 
%The function $t \mapsto u(t)
%= ( u_1(t), \ldots, u_m(t))$, that we will commonly denote as
%$u(\cdot)$, belongs to a certain class of functions of time, called the set of \emph{admissible controls}. 
Finally, $\Y = \mathbb{R}^m$ is the output space.

The prolongation of the nonlinear control system to the tangent bundle and the cotangent bundle is constructed as follows; cf. \cite{crouch}.

Given an initial state $x(0)=x_0$, take any coordinate neighborhood of
$\X$ containing $x_0$. Let $t \in [0,T] \mapsto x(t)$ be the solution
of~\eqref{eq:system} corresponding to the admissible input function $t \in [0,T]
\mapsto u(t) = (u_1(t),\ldots,u_m(t))$ and the initial state
$x(0)=x_0$, such that $x(t)$ remains within the selected coordinate
neighborhood.  Denote the resulting output by $t \in [0,T] \mapsto
y(t)=(y_1(t),\ldots,y_m(t))$, with $y_j(t)=H_j(x(t))$.  Then the
\emph{variational system} along the input-state-output trajectory $t
\in [0,T] \mapsto (x(t),u(t),y(t))$ is given by the following
time-varying system
\begin{equation}\label{eq:variational}
\begin{array}{rcl}
  \dot{\dx}(t) &=& \pder{f}{x} (x(t)) \dx(t) + \\[2mm]
   && \sum_{j=1}^m u_j(t)
  \pder{g_j}{x}(x(t)) \dx(t) + \\[2mm]
  && \sum_{j=1}^m \du_j(t) g_j(x(t)) \\[2mm]
  \dy_j (t) & = & \pder{h_j}{x} (x(t)) \dx(t) \, , \quad j=1,\ldots,m \, ,
  \end{array}
\end{equation}
with state $\dx(t) \in T_{x(t)}^*\X$, where $\du = (\du_1, \ldots, \du_m)^T$,
$\dy = (\dy_1,\ldots, \dy_m)^T$ denote the input and output vectors of the
variational system. (Note that $\pder{h_j}{x} (x)$ denotes a row vector.) 

The reason behind the terminology `variational'
comes from the following fact: let $(x(t,\eps),u(t,\eps),y(t,\eps))$,
$t\in [0,T]$, be a family of input-state-output trajectories
of~\eqref{eq:system} parameterized by $\eps \in (-\delta, \delta)$,
with $x(t,0)=x(t)$, $u(t,0)=u(t)$ and $y(t,0)=y(t)$, $t \in [0,T]$.
Then, the infinitesimal variations
\[
\dx(t) = \pder{x}{\eps}(t,0) \, , \quad \du(t) = \pder{u}{\eps}(t,0) \,
, \quad \dy(t) = \pder{y}{\eps}(t,0) \, ,
\]
satisfy equation~\eqref{eq:variational}.
\begin{remark} 
For a {\it linear} system $\dot{x} = Ax + Bu, y=Cx$ the variational systems along any trajectory are simply given as 
$\dot{\dx} = A\dx + B\du, \dy=C\dx$.
\end{remark}

The \emph{prolongation} (or {\it prolonged system}) of~\eqref{eq:system}
comprises the original
system~\eqref{eq:system} {\it together} with its variational systems, that is the total system
\begin{equation}\label{eq:prolonged-system-coordinates}
\begin{array}{rcl}
\dot{x} & = & f(x) + \sum_{j=1}^m u_j g_j (x)  \\[2mm]
\dot{\dx}(t) & = & \pder{f}{x} (x(t)) \dx(t) + \\[2mm]
&& \sum_{j=1}^m u_j(t)
  \pder{g_j}{x}(x(t)) \dx(t) + \\[2mm]
 && \sum_{j=1}^m \du_j(t) g_j(x(t)) 
  \\[3mm]
y_j & = & h_j(x), \quad j=1,\ldots,m  \\[2mm]
\dy_j (t) & = & \pder{h_j}{x} (x(t)) \, \dx(t), \quad j=1,\ldots, \, , 
\end{array}
\end{equation}
with inputs $u_j$, $\du_j$, outputs $y_j$, $\dy_j, j=1, \cdots, m,$ and state vector
$x$, $\dx$. 

Using the previous subsection the prolonged
system~\eqref{eq:prolonged-system-coordinates} on the tangent
space $T\X$ can be defined in the following {\it coordinate-free} way. Denote the elements of $T\X$ by $x_l=(x,\dx)$, where $\tau_{\X}(x_l)= x \in \X$ with $\tau_{\X}: T \X \to \X$ again the tangent bundle projection.

\begin{definition}\cite{crouch}\label{dfn:prolonged-system}
  The prolonged system $\dSigma$ of a nonlinear system $\Sigma$ of the
  form~\eqref{eq:system} is defined as the system
  \begin{equation}\label{eq:prolonged-system}
    \dSigma : \left\{
      \begin{array}{l}
        \displaystyle{\dot{x}_l = \supscr{f}{c} (x_l) + \sum_{j=1}^m u_j\supscr{g_j}{c} (x_l) +
         \sum_{j=1}^m \du_j \supscr{g_j}{v} (x_l)} \\ 
        y_j = \supscr{h_j}{v} (x_l) \, , \quad j=1,\ldots,m \\[2mm]
        \dy_j =\supscr{h_j}{c} (x_l) \, , \quad j=1,\ldots,m 
      \end{array}
    \right.
  \end{equation}
  with state $x_l=(x,\dx) \in T \X$, inputs $u_j, \du_j$ and outputs $y_j, \dy_j$, $j=1, \ldots, m$.
\end{definition}
Note that the prolonged system $\dSigma$ has state space $T\X$, input space $T\U$ and output space $T\Y$. One can easily check that in any system of local coordinates $x$ for $\X$ and the induced local coordinates $x,\dx$ for $T\X$,
the local expression of the system~\eqref{eq:prolonged-system} equals~\eqref{eq:prolonged-system-coordinates}.
\begin{remark}
For a linear system $\dot{x} = Ax + Bu, y=Cx$ the prolonged system is simply the {\it product} of the system with the copy system $\dot{\dx} = A\dx + B\du, \dy=C\dx$.
\end{remark}

The prolongation of the nonlinear control system $\Sigma$ to the {\it co-tangent bundle} is defined as follows. Associated to the variational system (\ref{eq:variational}) there is the {\it adjoint variational system}, defined as
\begin{equation}\label{eq:adjointvariational}
\begin{array}{rcl}
  \dot{p}(t) &=& -(\frac{\partial f}{\partial x})^T (x(t)) p(t)  \\[2mm]
   && - \sum_{j=1}^m u_j(t)
  (\frac{\partial g_j}{\partial x})^T(x(t)) p(t) \\[2mm]
 && - \sum_{j=1}^m du_j(t) \frac{\partial^T h_j}{\partial x}(x(t)) \\[2mm]
  dy_j (t) & = & p^Tg_j(x(t))  \, , \quad j=1,\ldots,m \, ,
  \end{array}
\end{equation}
with state variables $p \in T^*_{x(t)}\X$, and adjoint variational inputs and outputs $du_j, dy_j, j=1,\ldots,m$.

Then the original nonlinear system $\Sigma$ together with all it adjoint variational systems defines the total system
\begin{equation}\label{eq:prolonged-system-coordinatesH}
\begin{array}{rcl}
\dot{x} & = & f(x) + \sum_{j=1}^m u_j g_j (x)  \\[2mm]
\dot{p}(t) & = & - (\frac{\partial f}{\partial x})^T(x(t))p(t) \\[2mm]
&& - \sum_{j=1}^m u_j(t) (\frac{\partial g_j}{\partial x})^T(x(t)) p(t)  \\[2mm]
&& - \sum_{j=1}^m du_j(t) \frac{\partial^Th_j}{\partial x}(x(t))
  \\[3mm]
y_j & = & h_j(x), \quad j=1,\ldots,m  \\[2mm]
 dy_j (t) & = & p^Tg_j(x(t))  \, , \quad j=1,\ldots,m \, ,
\end{array}
\end{equation}
with inputs $u_j$, $du_j$, outputs $y_j$, $dy_j, j=1, \cdots, m,$ and state
$x$, $p$. This total system is called the {\it Hamiltonian extension}. In a coordinate-free way the Hamiltonian extension is defined as follows.
\begin{definition}\cite{crouch}\label{dfn:Hamiltonianextension}
  The Hamiltonian extension $d\Sigma$ of a nonlinear system $\Sigma$ of the
  form~\eqref{eq:system} is defined as the system
  \begin{equation}\label{Hamextension}
    d\Sigma : \left\{
      \begin{array}{rcl}
        \dot{x}_e & = & X_{H^f} (x_e) + \sum_{j=1}^m u_j X_{H^{g_j}} (x_e) + \\[2mm]
         && \sum_{j=1}^m du_j X_{\supscr{h}{v}_j} (x_e)  \\[2mm] 
        y_j &=& \supscr{h}{v}_j (x_e) \, , \quad j=1,\ldots,m \\[2mm]
        dy_j &=&H^{g_j} (x_e) \, , \quad j=1,\ldots,m 
      \end{array}
    \right.
  \end{equation}
  with state $x_e =(x,p) \in T^* \X$, inputs $u_j, du_j$ and outputs $y_j, dy_j$, $j=1, \ldots, m$.
\end{definition}
Note that the Hamiltonian extension $d\Sigma$ has state space $T^*\X$, input space $T^*\U$ and output space $T^*\Y$. 
\begin{remark}
For a linear system $\dot{x} = Ax + Bu, y=Cx$ the Hamiltonian extension is simply the product of the system with its adjoint system $\dot{p} = -A^Tp - C^T du, dy=B^Tp$.
\end{remark}
\begin{remark}
The prolongation $\dSigma$ of $\Sigma$ to the tangent bundle can be {\it combined} with the Hamiltonian extension $d\Sigma$ of $\Sigma$ to the co-tangent bundle. This will define a system on the Whitney sum $T \X \oplus T^* \X$ with inputs $u, \du, du$, states $x, \dx, p$ and outputs $y,\dy, dy$.
\end{remark}

\section{Invariant subbundles}
Let as before $\X$ denote the $n$-dimensional state space manifold. 
Consider the Whitney sum $T \X \oplus T^* \X$ (as explained before, the base manifold $\X$ together with the fiber space $T_x \X \times T_x^* \X$ at any point $x \in \X$).
\begin{definition}
A {\it subbundle} $K$ of $T \X \oplus T^* \X$ is a vector bundle over $\X$ with fiber $K(x) \subset T_x \X \times T_x^* \X$ at any point $x \in \X$. The subbundle $K$ is called {\it invariant} with respect to a vector field $f$ on $\X$ if
\begin{equation}\label{invariance}
(L_f X, L_f \alpha) \in K \, \mbox{ for any } (X, \alpha) \in K
\end{equation}
\end{definition}

\smallskip

\begin{remark}
If $K$ has only zero components in $T_x^* \X$ for any point $x \in \X$, then $K$ can be identified with a {\it distribution} on $\X$. Alternatively, if $K$ has only zero components in $T_x \X$ for any point $x \in \X$, then it can be regarded as a {\it co-distribution} on $\X$. In these cases invariance of $K$ with respect to $f$ amounts to invariance of the associated distribution, respectively, co-distribution, with respect to $f$.
\end{remark}
\begin{remark}
The above definition of invariance of $K$ is formally identical to the definition of an {\it infinitesimal symmetry} of a {\it Dirac structure}; see \cite{Courant, dorfman, vdssym} for details. (A Dirac structure is a subbundle of $T \X \oplus T^* \X$  which is maximally isotropic with respect to the duality product.)
\end{remark}

Associated to the subbundle $K$ we can define the {\it submanifold} $\K$ of $T \X \oplus T^* \X$ as follows
\begin{equation}
\K := \{ (x, \dx, p) \in T \X \oplus T^* \X \mid (\dx,p) \in K(x) \}
\end{equation}
We have the following useful characterization of invariance of invariance of $K$. First, recall from the previous section that we may lift the vector field $f$ to the vector field $\supscr{f}{c}$ on $T\X$ (the complete lift), as well as to the vector field $X_{H^f}$ on $T^*\X$ (the complete Hamiltonian lift). Taken together this results in a lift to a vector field on the Whitney sum $T \X \oplus T^* \X$, denoted as
\begin{equation}\label{fextended}
\supscr{f}{c} \oplus X_{H^f}
\end{equation}
\begin{proposition}
The subbundle $K$ is invariant with respect to the vector field $f$ on $\X$ if and only if the submanifold $\K$ is invariant for the vector field $\supscr{f}{c} \oplus X_{H^f}$.
\end{proposition}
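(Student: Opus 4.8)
\emph{Plan.}
The plan is to integrate the vector field $\supscr{f}{c}\oplus X_{H^f}$ explicitly and to recognise its flow as the fibrewise--linear lift to $T\X\oplus T^*\X$ of the flow of $f$; invariance of the submanifold $\K$ then turns into a statement about this linear flow preserving the subbundle $K$, which is matched with invariance of $K$ by differentiating at $t=0$. Let $\Phi_t$ be the (local) flow of $f$ on $\X$. By the characterisations recalled in Section~2, the flow of $\supscr{f}{c}$ on $T\X$ is the tangent map $d\Phi_t$, while the flow of the Hamiltonian vector field $X_{H^f}$ on $T^*\X$ is the cotangent lift of $\Phi_t$, i.e.\ on fibres the contragredient (inverse transpose) $(d\Phi_t|_x)^{-*}$ --- this is read off from the coordinate expression for $X_{H^f}$, whose $p$--component integrates $\dot p=-(\partial f/\partial x)^T(x(t))\,p$, the equation contragredient to the variational equation $\dot{\dx}=(\partial f/\partial x)(x(t))\,\dx$. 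Hence the flow $\Psi_t$ of $\supscr{f}{c}\oplus X_{H^f}$ restricts on each fibre to the linear isomorphism $\Psi_{t,x}=d\Phi_t|_x\oplus(d\Phi_t|_x)^{-*}\colon T_x\X\times T_x^*\X\to T_{\Phi_t(x)}\X\times T_{\Phi_t(x)}^*\X$ covering $\Phi_t$; and for any section $(X,\alpha)$ of $T\X\oplus T^*\X$ a direct computation gives $\Psi_{t,x}^{-1}\big((X,\alpha)(\Phi_t(x))\big)=\big(\Phi_t^*X,\ \Phi_t^*\alpha\big)(x)$, the pair of ordinary pullbacks of the vector field $X$ and the one--form $\alpha$ by $\Phi_t$.

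\emph{Reduction to a fibrewise condition.}
Since $\K$ is exactly the total space of the vector subbundle $K$, the integral curve of $\supscr{f}{c}\oplus X_{H^f}$ through $(x,\dx,p)\in\K$ is $t\mapsto\Psi_t(x,\dx,p)$, and it remains in $\K$ iff $\Psi_{t,x}(\dx,p)\in K(\Phi_t(x))$. Thus $\K$ is invariant iff $\Psi_{t,x}\big(K(x)\big)\subseteq K(\Phi_t(x))$ for all $x$ and all small $t$; since $\Psi_{t,x}$ is a linear isomorphism and the fibres of $K$ have constant rank $k=\rank K$, this containment is automatically an equality, so equivalently the curve of $k$--planes $t\mapsto\Psi_{t,x}^{-1}\big(K(\Phi_t(x))\big)$ in the fixed vector space $T_x\X\times T_x^*\X$ is constant and equal to $K(x)$.

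\emph{The two implications.}
If $\K$ is invariant, let $(X,\alpha)$ be any local section of $K$; by the equivalent form just obtained, $c(t):=\big(\Phi_t^*X,\Phi_t^*\alpha\big)(x)=\Psi_{t,x}^{-1}\big((X,\alpha)(\Phi_t(x))\big)$ lies in the fixed subspace $K(x)$ for all small $t$, hence so does $\dot c(0)=\big(L_fX,\ L_f\alpha\big)(x)$, the latter equality being the description of the Lie derivative as the infinitesimal pullback along the flow. So $(L_fX,L_f\alpha)$ is a section of $K$, i.e.\ $K$ is invariant under $f$. Conversely, assume $K$ invariant, fix $x_0$, write $x(t)=\Phi_t(x_0)$, take a local frame $(X_i,\alpha_i)$, $i=1,\dots,k$, of $K$, and use invariance of $K$ to write $(L_fX_i,L_f\alpha_i)=\sum_j\gamma_{ij}(X_j,\alpha_j)$ with $\gamma_{ij}\in C^\infty(\X)$. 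Using the flow property to differentiate at the moving point, the $\big(T_{x_0}\X\times T_{x_0}^*\X\big)$--valued curves $e_i(t):=\big(\Phi_t^*X_i,\Phi_t^*\alpha_i\big)(x_0)$ satisfy the linear ODE $\dot e_i(t)=\sum_j\gamma_{ij}(x(t))\,e_j(t)$; a linear ODE preserves the linear span of its solution tuple, so $\operatorname{span}\{e_i(t)\}=\operatorname{span}\{e_i(0)\}=K(x_0)$ for all small $t$. As $\operatorname{span}\{e_i(t)\}=\Psi_{t,x_0}^{-1}\big(K(x(t))\big)$, this is precisely the fibrewise condition above, so $\K$ is invariant.

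\emph{Main obstacle, and an alternative.}
The step I expect to require the most care is the bookkeeping around $\Psi_t$: verifying that the flow of $X_{H^f}$ really is the contragredient cotangent lift --- so that applying $\Psi_{t,x}^{-1}$ to a section yields the honest pullbacks $\Phi_t^*X$ and $\Phi_t^*\alpha$ --- and that the derivative at $t=0$ then assembles exactly into the pair $(L_fX,L_f\alpha)$; conceptually this is the observation that the complete lift on $T\X$ and the Hamiltonian lift on $T^*\X$ are precisely the two lifts for which the tangent and cotangent infinitesimal pullbacks appear simultaneously. An alternative proof that avoids the ODE is to cut $\K$ out locally by the fibrewise--linear functions $F_a=\ell_{\beta_a}+H^{W_a}$ on $T\X\oplus T^*\X$, i.e.\ $F_a(x,\dx,p)=\langle\beta_a(x),\dx\rangle+\langle p,W_a(x)\rangle$, where $\ell_\beta$ is the fibrewise--linear function on $T\X$ determined by a one--form $\beta$, $H^{W}$ is the Hamiltonian $\langle p,W(x)\rangle$ on $T^*\X$, and $(\beta_a,W_a)$, $a=1,\dots,2n-k$, is a local frame of the annihilator subbundle $K^\circ\subseteq T^*\X\oplus T\X$; using $L_{\supscr{f}{c}}\ell_\beta=\ell_{L_f\beta}$ and $L_{X_{H^f}}H^{W}=H^{L_fW}$ one obtains $L_{\supscr{f}{c}\oplus X_{H^f}}F_a=\ell_{L_f\beta_a}+H^{L_fW_a}$, which vanishes on $\K$ iff $(L_f\beta_a,L_fW_a)$ is a section of $K^\circ$, i.e.\ iff $K^\circ$ is invariant under $f$; one then passes between invariance of $K^\circ$ and of $K$ by noting that $L_f$ is a derivation of the natural pairing between $T\X\oplus T^*\X$ and $T^*\X\oplus T\X$ and that $(K^\circ)^\circ=K$. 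For this route the delicate points are instead these two Lie--derivative identities and the final duality step.
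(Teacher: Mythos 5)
Your proof is correct, but it takes a genuinely different route from the paper's. The paper argues infinitesimally and in coordinates: at a point $(x,X(x),\alpha(x))$ of $\K$ it writes the value of $\supscr{f}{c}\oplus X_{H^f}$ as the derivative along $f$ of the section $x\mapsto(x,X(x),\alpha(x))$ (automatically tangent to $\K$) minus the vertical vector $\bigl(0,L_fX(x),L_f\alpha(x)\bigr)$, so that tangency of the lifted field to $\K$ is equivalent to the vertical defect lying in the fibre $K(x)$, i.e.\ to $(L_fX,L_f\alpha)\in K$. You instead integrate: you identify the flow of $\supscr{f}{c}\oplus X_{H^f}$ as the fibrewise-linear map $d\Phi_t\oplus(d\Phi_t)^{-*}$ covering the flow $\Phi_t$ of $f$, reduce invariance of $\K$ to constancy of the curve of subspaces $\Psi_{t,x}^{-1}\bigl(K(\Phi_t(x))\bigr)$, and pass to the Lie-derivative condition by differentiating at $t=0$ in one direction and by a frame-plus-linear-ODE argument in the other. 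What your route buys is a coordinate-free argument and a fully explicit converse: the paper's one-line converse implicitly relies on every element of $K(x)$ being the value of a section and on reading off the invariance of $K$ from tangency of the vertical defect, whereas your ODE propagation of a frame makes this airtight; it also cleanly separates flow-invariance from tangency (which are equivalent here since $\K$ is an embedded submanifold --- worth one sentence). The cost is the preliminary identification of the flow of $X_{H^f}$ as the contragredient cotangent lift and the identity $\tfrac{d}{dt}\Phi_t^*=\Phi_t^*L_f$, against the paper's three-line coordinate computation. Your alternative via the annihilator subbundle and the fibrewise-linear functions $\ell_\beta+H^W$ is also sound (the signs in $L_{\supscr{f}{c}}\ell_\beta=\ell_{L_f\beta}$ and $L_{X_{H^f}}H^W=H^{L_fW}$ work out with the paper's conventions), and is closest in spirit to how invariance is usually verified for Dirac structures, which the paper itself mentions as the formal analogue of its definition.
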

\begin{proof}
In coordinates $x$ and induced local coordinates for $T\X$ and $T^*\X$ the vector field $\supscr{f}{c} \oplus X_{H^f}$ at a point $(x,X(x),\alpha(x)) \in \K \subset T \X \oplus T^* \X $ is given by
\[
\begin{array}{rcl}
\begin{bmatrix}
f(x) \\ \frac{\partial f}{\partial x}(x) X(x) \\ - (\frac{\partial f}{\partial x})^T(x) \alpha(x)	 
\end{bmatrix}
& = &
\begin{bmatrix}
f(x) \\ \frac{\partial X}{\partial x}(x) f(x) - L_f X(x) \\  \frac{\partial \alpha}{\partial x}(x) f(x)) - L_f \alpha(x) 
\end{bmatrix} \\[6mm]
& = &
\begin{bmatrix}
f(x) \\ \frac{\partial X}{\partial x}(x) f(x)  \\  \frac{\partial \alpha}{\partial x}(x) f(x)
\end{bmatrix}
-
\begin{bmatrix}
0 \\ L_f X(x) \\ L_f \alpha(x)
\end{bmatrix} \, ,
\end{array}
\]
where the first vector in the last term denotes a tangent vector to $\K$.
Thus if (\ref{invariance}) holds then the vector field $\supscr{f}{c} \oplus X_{H^f}$ is tangent to $\K$. Conversely, if $\supscr{f}{c} \oplus X_{H^f}$ is tangent to $\K$ then this implies that the second vector in the last term is tangent to $\K$ for all $(X(x),\alpha(x)) \in K(x)$, which amounts to (\ref{invariance}), i.e., invariance of $K$. 
\end{proof}

\section{Differential Hamiltonian systems}
It is well-known, see, e.g., \cite{vanderschaftbook}, that the Hamiltonian equations arising from applying Pontryagin's Maximum principle to the optimal control problem of minimizing the cost criterion
\begin{equation}\label{opt}
 \frac{1}{2} \int_0^{\infty} \left( \|y(t)\| ^2 + \|u(t)\| ^2\right) dt
 \end{equation}
 for the nonlinear control system $\Sigma$ are given by the following system on $T^* \X$
 \begin{equation}\label{Hamopt}
 \begin{array}{rcl}
 \dot{x} & = & \frac{\partial^T H^{\mathrm{opt}}}{\partial p}(x,p) \\[2mm]
 \dot{p} & = & - \frac{\partial^T H^{\mathrm{opt}}}{\partial x}(x,p)
 \end{array}
 \end{equation}
 with the Hamiltonian $H^{\mathrm{opt}}: T^* \X \to \mathbb{R}$ given by
 \begin{equation}
 H^{\mathrm{opt}}(x,p) = p^Tf(x) - \frac{1}{2}p^Tg(x)g^T(x)p + \frac{1}{2} h^T(x)h(x),
 \end{equation}
where $g$ has columns $g_1, \ldots g_m$ and $h: \X \to \mathbb{R}^m$ has components $h_1, \ldots, h_m$.
 
Motivated by \cite{kawano} we will study the differential version of the Hamiltonian system (\ref{Hamopt}). In the next section we will apply this to the differential version of the Hamilton-Jacobi equation corresponding to (\ref{Hamopt}), called a differential Riccati equation in \cite{kawano}.

In order to motivate the subsequent developments, let us first recall from e.g. \cite{vanderschaftbook}, see also \cite{crouch}, that in the {\it linear} case the Hamiltonian system (\ref{Hamopt}) on $T^*\X$ can be obtained from interconnecting the linear system $\dot{x} = Ax + Bu, y=Cx$ with the adjoint system $\dot{p} = -A^Tp - C^T du, dy = B^Tp$, by the interconnection equations
\[
u= - dy, \, du= y, 
\]
leading to the {\it linear} Hamiltonian system
\begin{equation}
\begin{bmatrix} \dot{x} \\ \dot{p} \end{bmatrix} =
\begin{bmatrix} A & -BB^T \\ -C^TC & -A^T \end{bmatrix} 
\begin{bmatrix} x \\ p \end{bmatrix}
\end{equation}
Similarly, we will now define a {\it differential Hamiltonian system} by considering the interconnection of the prolongation $\dSigma$ given in (\ref{eq:prolonged-system-coordinates}) and the Hamiltonian extension $d\Sigma$ in (\ref{eq:prolonged-system-coordinatesH}), via the interconnection equations on the (adjoint) variational inputs and outputs
\[
\du = - dy, \, du = \dy
\]
It can be directly verified from the definition of the prolongation $\dSigma$ and the Hamiltonian extension $d\Sigma$ that this defines a system on the Whitney sum $T \X \oplus T^* \X$ which in a coordinate-free fashion is given as
\begin{equation}\label{diffHam}
\begin{array}{rcl}
\dot{z} & = &  \supscr{f}{c} \oplus X_{H^f}(z) - \sum_{j=1}^m H^{g_j} \supscr{g}{v}_j \oplus \supscr{h}{c}_j X_{\supscr{h_j}{v}}(z)  \\[2mm]
&& + \sum_{j=1}^m u_j  \supscr{g}{c}_j \oplus X_{H^{g_j}}(z) \\[2mm]
      y_j &= &h_j(x) \, , \quad j=1,\ldots,m \, ,
      \end{array}
\end{equation}
with total state $z:=(x,\dx,p)$, and with remaining inputs and outputs $u_j,y_j, j=1, \ldots,m$. Note that $\supscr{g}{v}_j \oplus X_{\supscr{h_j}{v}}$ is the vector field on the Whitney sum $T \X \oplus T^* \X$ obtained from combining the vector field $\supscr{g}{v}_j$ on $T \X$ with the vector field $X_{\supscr{h_j}{v}}$ on $T^*\X$. The system (\ref{diffHam}) is called the {\it differential Hamiltonian system}\footnote{Note however that the system (\ref{diffHam}) by itself is {\it not} Hamiltonian in an ordinary sense. However it can be interpreted as a linear Hamiltonian system along trajectories of $\Sigma$.}.

Similarly to what we did before for invariance with respect to $\supscr{f}{c} \oplus X_{H^f}$ we can define invariance of subbundles with respect to the differential Hamiltonian system (\ref{diffHam}).
\begin{definition}
Consider the differential Hamiltonian system (\ref{diffHam}) on the Whitney sum $T \X \oplus T^* \X$. A subbundle $K$ of $T \X \oplus T^* \X$ with its associated submanifold $\mathcal{K} \subset T \X \oplus T^* \X$ is invariant with respect to (\ref{diffHam}) if 
\begin{equation}\label{diffHam1}
\supscr{f}{c} \oplus X_{H^f} - \sum_{j=1}^m H^{g_j} \supscr{g}{v}_j \oplus \supscr{h}{c}_j X_{\supscr{h}{v}_j}  + \sum_{j=1}^m u_j  \supscr{g}{c}_j \oplus X_{H^{g_j}}
\end{equation}
is tangent to $\mathcal{K}$ for all $u_j,j=1, \ldots,m$.
\end{definition}
The following proposition is immediate.
\begin{proposition}\label{propinv}
A subbundle $K$ of $T \X \oplus T^* \X$ with its associated submanifold $\mathcal{K} \subset T \X \oplus T^* \X$ is called invariant with respect to (\ref{diffHam}) if and only if \\
$(1)$ $\supscr{f}{c} \oplus X_{H^f} - \sum_{j=1}^m H^{g_j} \supscr{g}{v}_j \oplus \supscr{h}{c}_j X_{\supscr{h_j}{v}}$ is tangent to $\mathcal{K}$,\\
$(2)$ $K$ is invariant for $g_j, j=1, \ldots,m$.
\end{proposition}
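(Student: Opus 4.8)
The plan is to exploit the linearity of condition (\ref{diffHam1}) in the parameters $u_j$. The vector field in (\ref{diffHam1}) can be written as $V_0 + \sum_{j=1}^m u_j V_j$, where $V_0 \eqdef \supscr{f}{c} \oplus X_{H^f} - \sum_{j=1}^m H^{g_j} \supscr{g}{v}_j \oplus \supscr{h}{c}_j X_{\supscr{h_j}{v}}$ and $V_j \eqdef \supscr{g}{c}_j \oplus X_{H^{g_j}}$. Since tangency of a vector field to a submanifold is a pointwise-linear condition on the vector field, requiring $V_0 + \sum_j u_j V_j$ to be tangent to $\mathcal{K}$ for \emph{all} $u \in \real^m$ is equivalent to requiring that $V_0$ be tangent to $\mathcal{K}$ and that each $V_j$ be tangent to $\mathcal{K}$ individually (take $u=0$ for the first; then subtract to isolate the $u_j$-terms one at a time). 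This gives the equivalence of invariance with respect to (\ref{diffHam}) and the pair of conditions: $(1)$ $V_0$ tangent to $\mathcal{K}$, and $(1')$ each $V_j = \supscr{g_j}{c} \oplus X_{H^{g_j}}$ tangent to $\mathcal{K}$.

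It then remains to identify condition $(1')$ with condition $(2)$ of the proposition, namely that $K$ is invariant for each $g_j$. But this is exactly the content of the Proposition in Section~\ref{} immediately preceding Section~4 (the one asserting that $K$ is invariant with respect to a vector field $f$ on $\X$ if and only if $\mathcal{K}$ is invariant for $\supscr{f}{c} \oplus X_{H^f}$), applied with $f$ replaced by $g_j$: tangency of $\supscr{g_j}{c} \oplus X_{H^{g_j}}$ to $\mathcal{K}$ is equivalent to invariance of $K$ with respect to $g_j$. Combining the two steps yields the stated equivalence.

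I do not expect a serious obstacle here; the word ``immediate'' in the paper is apt. The only point requiring a little care is the first step: one must note that $\mathcal{K}$ is a submanifold whose tangent spaces, in the induced coordinates, are affine-translates governed by the linear fiber structure of $K$, so that the set of vectors based at a point of $\mathcal{K}$ and tangent to $\mathcal{K}$ is itself a linear subspace of the ambient tangent space. Consequently the affine family $\{V_0 + \sum_j u_j V_j\}_u$ lies in that subspace for all $u$ iff $V_0$ and all the $V_j$ do. Once this is observed, the proof is just the bookkeeping described above together with one invocation of the earlier Proposition.
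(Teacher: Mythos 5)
Your proof is correct and is precisely the argument the paper leaves implicit when it declares the proposition ``immediate'': use linearity of the tangency condition in the parameters $u_j$ (valid because the tangent space to $\mathcal{K}$ at each of its points is a linear subspace of the ambient tangent space) to split off the $u$-independent part and the coefficients of the $u_j$, and then convert tangency of $\supscr{g_j}{c} \oplus X_{H^{g_j}}$ to $\mathcal{K}$ into invariance of $K$ with respect to $g_j$ by the Proposition of Section~3 applied with $f$ replaced by $g_j$. No gap.
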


\section{Invariant Lagrangian subbundles and differential Riccati equations}
Associated to the optimal control problem (\ref{opt}) and the resulting Hamiltonian system (\ref{Hamopt}) there is the Hamilton-Jacobi-Bellman equation 
\begin{equation}\label{hamjac}
\begin{array}{c}
 \frac{\partial P}{\partial x}(x)f(x) - \frac{1}{2}\frac{\partial P}{\partial x}(x)g(x)g^T(x)\frac{\partial^T P}{\partial x}(x) \\[2mm] + \frac{1}{2} h^T(x)h(x) =0
 \end{array}
 \end{equation}
Under appropriate conditions, the positive solution to the Hamilton-Jacobi equation is the {\it value function} $P: \X \to \mathbb{R}$ of the optimal control problem, that is, $P(x)$ is the minimal cost for the system starting at time $0$ at initial state $x$. Furthermore, the optimal control is given in feedback form as $u= -g^T(x) \frac{\partial^T P}{\partial x}(x)$, while the {\it Lagrangian} submanifold $N := \{(x,p) \in T^* \X \mid p= \frac{\partial^T P}{\partial x}(x) \}$, see e.g. \cite{abraham},  equals the stable invariant manifold of the Hamiltonian system (\ref{Hamopt}), cf. \cite{vanderschaftbook}. In the linear case $\dot{x} = Ax + Bu, y=Cx$ the Hamilton-Jacobi equation (\ref{hamjac}) reduces to the well-known Riccati equation
\begin{equation}\label{linRic}
A^TP + PA - PBB^TP + C^TC=0,
\end{equation}
whose positive solution $P$ yields the quadratic value function $P(x) = \frac{1}{2} x^TPx$, and is such that the Lagrangian subspace $\{(x,p) \mid p = Px \}$ is the generalized eigenspace corresponding to the $n$ eigenvalues in the left-half of the complex plane (assuming, e.g., minimality of $(A,B,C)$).

Recently in \cite{kawano}, motivated in particular by developments in \cite{manchester, manchester1}, the {\it differential version} of the Hamilton-Jacobi equation (\ref{hamjac}) was introduced, called a (generalized) differential Riccati equation. In this section we will approach this from a coordinate-free point of view, using the machinery built up in the previous sections.

In order to do so we will define a special type of subbundle of the Whitney sum $T \X \oplus T^* \X$.
\begin{definition}
A subbundle $K$ of $T \X \oplus T^* \X$ is called a {\it Lagrangian subbundle} if $K(x) \subset T_x \X \times T_x^* \X$ is a Lagrangian subspace (with respect to the canonical symplectic form on $T_x \X \times T_x^* \X$ \cite{abraham}) for every $x \in \X$.
\end{definition}
\begin{example} All subbundles $K$ with $K(x) = \{ (\dx,p) \mid p = \Pi(x)\dx \}$, where $\Pi(x)$ is a {\it symmetric} matrix, are Lagrangian. More generally, all subbundles
\begin{equation}
\begin{array}{c}
K(x)  = \{ (\dx,p) \mid V(x)p = U(x)\dx, \\[3mm] 
V(x)U^T(x)  = U(x)V^T(x),  \rank \begin{bmatrix} U(x) & V(x) \end{bmatrix} = n\}
\end{array}
\end{equation}
with $U(x), V(x)$ $n \times n$ matrices depending on $x$, are Lagrangian.
\end{example}

An important special class of Lagrangian subbundles is defined as follows. Let $N \subset T^* \X$ be a {\it Lagrangian submanifold}, given as $N = \{(x,p) \in T^* \X \mid p  = \frac{\partial^T P}{\partial x}(x) \}$ for some (generating) function $P: \X \to \mathbb{R}$. Then tangent vectors to $N$ at a point $(x,p) \in N$ are vectors $ (X(x), \alpha(x)) \in T_{(x,p)} N$ with $\alpha(x) = \frac{\partial^2 P}{\partial x^2}(x)X(x)$,
where $X(x) \in T_x\X$ and $\alpha(x) \in T_pT_x^*\X$, and with $\frac{\partial^2 P}{\partial x^2}(x)$ denoting the Hessian matrix of $P$. Identifying $T_pT_x^*\X$ with $T_x^*\X$ (well-defined since $T_x^*\X$ is a linear space), this yields the Lagrangian subbundle
\begin{equation}\label{intLag}
K(x) = \{ (\dx,p) \mid p = \frac{\partial^2 P}{\partial x^2}(x) \dx \}
\end{equation}
Such Lagrangian subbundles will be called {\it integrable} Lagrangian subbundles. Local integrability of Lagrangian subbundles can be characterized as follows. A Lagrangian subbundle $K(x) = \{ (\dx,p) \mid p = \Pi(x)\dx \}$ is integrable if and only if there exists a function $P: \X \to \mathbb{R}$ such that
\begin{equation}\label{integrabilityR1}
\pi_{ij}(x) = \frac{\partial^2 P}{\partial x_i \partial x_j}(x), \quad i,j=1, \cdots,n
\end{equation}
A necessary and sufficient condition for the local existence of such a function $P$ is the integrability condition (see \cite{duistermaat} for the same condition in the characterization of Hessian Riemannian metrics)
\begin{equation}\label{integrabilityR}
\frac{\partial \pi_{jk}}{\partial x_i}(x) = \frac{\partial \pi_{ik}}{\partial x_j}(x), \quad i,j,k=1, \cdots,n
\end{equation}
Indeed, (\ref{integrabilityR}) guarantees the local existence of functions $p_k(x)$ such that $\pi_{jk}(x) = \frac{\partial p_k}{\partial x_j}(x), j,k=1, \cdots,n$. Then by symmetry of $\Pi$
\begin{equation}
\frac{\partial p_k}{\partial x_j}(x) = \pi_{jk}(x) = \pi_{kj}(z) = \frac{\partial p_j}{\partial x_k}(x), \quad j,k=1, \cdots,n
\end{equation}
which is the integrability condition guaranteeing the local existence of a function $P(x)$ satisfying
\begin{equation}\label{int}
p_j(x) = \frac{\partial P}{\partial x_j}(x), \quad j=1,\cdots,n
\end{equation}
By differentiation of (\ref{int}) with respect to $x_i$ and in view of the definition of $p_j(x), j=1,\cdots, n,$ this amounts to (\ref{integrabilityR1}). 

\smallskip

Now consider a Lagrangian subbundle $K$ of $T \X \oplus T^* \X$ which is invariant for the system (\ref{diffHam}), i.e., by Proposition \ref{propinv} $\supscr{f}{c} \oplus X_{H^f} - \sum_{j=1}^m H^{g_j} \supscr{g}{v}_j \oplus \supscr{h}{c}_j X_{\supscr{h_j}{v}}$ is tangent to $\mathcal{K}$ and $K$ is invariant for $g_j, j=1, \ldots,m$. 

Additionally assume that the projection of $K(x) \subset T_x \X \oplus T_x^* \X$ on $T_x \X$ is equal to the whole tangent space $T_x \X$ for all $x \in \X$. Then in any set of local coordinates $x^1, \cdots, x^n$ for $\X$ the Lagrangian subbundle $K$ is spanned by pairs of vector fields and one-forms
\[
(\frac{\partial}{\partial x_i}, \pi_i), \quad i=1, \cdots, n
\]
where the one-forms
\[
\pi_i(x) = \pi_{1i}(x) dx^I + \cdots \pi_{ni}(x) dx^n, \quad  i=1, \cdots, n
\]
satisfy, because of the fact that $K$ is Lagrangian, the symmetry property
\[
\pi_{ji}(x) = \pi_{ij}(x) , \quad i,j=1, \cdots, n
\]
Defining the $n \times n$ symmetric matrix $\Pi(x)$ with $(i,j)$-th element $\pi_{ij}$ it immediately follows, cf. (\ref{1}) and (\ref{2}), that invariance of $K$ with respect to the system (\ref{diffHam}) amounts to the coordinate expression ($\supscr{f}{c} \oplus X_{H^f} - \sum_{j=1}^m H^{g_j} \supscr{g}{v}_j \oplus \supscr{h}{c}_j X_{\supscr{h_j}{v}}$ is tangent to $\mathcal{K}$)
\begin{equation}\label{3}
\begin{array}{l}
(\frac{\partial f}{\partial x})^T(x)\Pi(x) + \Pi(x)\frac{\partial f}{\partial x}(x) 
- \Pi(x)g(x)g^T(x)\Pi(x) \\[3mm]
+ (\frac{\partial h}{\partial x})^T(x)\frac{\partial h}{\partial x}(x) + \frac{\partial \Pi}{\partial x}(x)f(x)=0,
\end{array}
\end{equation}
together with ($K$ is invariant for $g_j, j=1, \ldots,m$)
\begin{equation}\label{4}
\begin{array}{r}
(\frac{\partial g_j}{\partial x})^T(x)\Pi(x) + \Pi(x)\frac{\partial g_j}{\partial x}(x) + \frac{\partial \Pi}{\partial x}(x)g_j(x)= 0, \\[2mm]
 j=1, \cdots, m
 \end{array}
\end{equation}
The equation (\ref{3}) is called in \cite{kawano} the (generalized) {\it differential Riccati equation}.
\begin{remark}
Note that the equation (\ref{4}) is not present in \cite{kawano} since in that paper throughout the assumption is made (continuing upon similar assumptions in \cite{manchester, manchester1}) that the vector fields $g_j$ are independent of $x$ (that is, constant in the chosen local coordinates $x$) and furthermore that $\frac{\partial \Pi}{\partial x}(x)g_j= 0, \, j=1, \cdots, m$. In that case (\ref{4}) is trivially satisfied.
\end{remark}
\begin{remark}
In \cite{kawano} it is proved, analogousy to the linear case (\ref{linRic}), that an $n$-dimensional subbundle $K$ which is invariant with respect to $\supscr{f}{c} \oplus X_{H^f} - \sum_{j=1}^m H^{g_j} \supscr{g}{v}_j \oplus \supscr{h}{c}_j X_{\supscr{h}{v}_j}$ and has eigenvectors in the left-half of the complex plane is necessarily Lagrangian.
\end{remark}
\begin{remark}
Another connection is to the work on {\it state-dependent} Riccati equations; see, e.g., \cite{cimen} and the references quoted therein.
\end{remark}

\section{Differential Lyapunov equations}

Differential Lyapunov equations correspond to the case where $g_j=0, j=1, \cdots, m$, for the system $\Sigma$ (no inputs), and the interconnection of the resulting prolongation $\dSigma$ and Hamiltonian extension $d \Sigma$ reduces to $du = \dy$. (Notice that the variational inputs $\du$ and adjoint variational outputs $dy$ are absent.) This leads to the simplified differential Hamiltonian system (compare with (\ref{diffHam}))
\begin{equation}
\begin{array}{rcl}
\dot{z} & = &  \supscr{f}{c} \oplus X_{H^f}(z) - \sum_{j=1}^m \supscr{h}{c}_j X_{\supscr{h}{v}_j}(z)  \\[2mm]
      y_j &= & h_j(x) \, , \quad j=1,\ldots,m \, ,
      \end{array}
\end{equation}
while the differential Riccati equation (\ref{3}) simplifies to 
\begin{equation}
\begin{array}{l}
(\frac{\partial f}{\partial x})^T(x)\Pi(x) + \Pi(x)\frac{\partial f}{\partial x}(x) \\[3mm]
+ (\frac{\partial h}{\partial x})^T(x)\frac{\partial h}{\partial x}(x) + \frac{\partial \Pi}{\partial x}(x)f(x)=0 \, ,
\end{array}
\end{equation}
and (\ref{4}) is void. This is nothing else than the standard type of equation considered in contraction analysis \cite{lohmiller, jouffroy}.

On the other hand, an extension of the differential Hamiltonian system (\ref{diffHam}) concerns the differential version of the state feedback $H_{\infty}$ problem (see, e.g., \cite{manchester1, vanderschaftbook}), in which case there are, next to the input vector fields $g_j, j=1, \cdots, m$, additional {\it disturbance} vector fields. This relates to previous work on differential $L_2$-gain; see \cite{manchester1, vdsdiffpass}.

\section{Conclusions and outlook}
We have described a geometric framework for defining differential Hamiltonian systems and (generalized) differential Riccati equations. This already enabled the consideration of arbitrary input vector fields. The precise implications of this framework are yet to be seen, which is a topic of current research. In particular, the notions of 'integrability' of differential Hamiltonian systems and Riccati equations need further study, as well as the implications towards properties of incremental stabilizability, see, e.g.,  \cite{manchester,sepulchreforni1,kawano}.

\end{document}